\newtheorem{theorem}{Theorem}
\newtheorem{lemma}[theorem]{Lemma}
\newtheorem{dfn}[theorem]{Definition}
\def\ps@pprintTitle{%
  \let\@oddhead\@empty
  \let\@evenhead\@empty
  \let\@oddfoot\@empty
  \let\@evenfoot\@oddfoot
}
\begin{document}

\begin{frontmatter}

\title{Distributional Chaos in the Baire Space}

\author[J. Mohn]{Jasmin Mohn}
\affiliation[J. Mohn]{organization={Department of Mathematics, United States Military Academy},%Department and Organization
            %addressline={}, 
            city={West Point},
            state={NY},
            postcode={10996}, 
            country={USA}}
            
\author[B. E. Raines]{Brian E. Raines}
\affiliation[B. E. Raines]{organization={Department of Mathematics, Baylor University},%Department and Organization
            %addressline={}, 
            city={Waco},
            state={TX},
            postcode={76798--7328}, 
            country={USA}}
\begin{abstract}
In this paper we consider the question of distributional chaos on non-compact metric dynamical systems.  We focus on a shift space over a countable alphabet, the Baire Space. We prove that on the Baire Space subshifts of finite type exhibit dense distributional chaos and subshifts of bounded type that are perfect and have a dense set of periodic points also have distributional chaos.
\end{abstract}

\begin{keyword}
%% keywords here, in the form: keyword \sep keyword
Distributional Chaos \sep Baire Space \sep Chaotic Pair \sep Schweizer-Sm\'{\i}tal Chaos
%% PACS codes here, in the form: \PACS code \sep code

%% MSC codes here, in the form: \MSC code \sep code
%% or \MSC[2008] code \sep code (2000 is the default)
\MSC[2020] 37B10 \sep 37B20 \sep 37B99
\end{keyword}

\end{frontmatter}
\section{Introduction}
\label{intro}
The concept of distributional chaos is well-studied. See the next section for relevant definitions in the context of metric spaces. It was first introduced as \emph{strong chaos} in 1994 for continuous mappings on the interval by Schweizer and Sm\'{\i}tal \cite{Schweizer}. In conjunction with Sklar, they showed that a continuous mapping on the interval is distributionally chaotic if and only if it has positive topological entropy \cite{smital}.
However, this does not extend to compact metric spaces as Liao and Fan showed.  There are in fact continuous mappings on a compact metric space with distributional chaos that have zero topological entropy \cite{Liao}. It has been shown that a continuous mapping on a compact metric space has distributional chaos if the space is perfect and has the specification property \cite{SKLAR}.
The extension to compact metric spaces led to the introduction of two generalized forms of distributional chaos, distributional chaos of type 2 and type 3 \cite{three}.  What was before know as distributional chaos is now known as distributional chaos of type 1. These three types of distributional chaos have been found to be equivalent for continuous mappings on trees and graphs \cite{graphs,Risong}. However, this is not the case for dendrites \cite{Roth}.  Stronger versions of distributional chaos such as dense distributional chaos, uniform distributional chaos, and transitive distributional chaos were also introduced by Oprocha \cite{OPROCHA}.
Distributional chaos has since been extended to uniform spaces \cite{Shah}.

In this paper we focus on the question of distributional chaos on non-compact metric spaces. Specifically, we look at a symbolic dynamical system known as the Baire Space, defined in the next section, which is a shift space on a countable alphabet with the product topology and usual shift map. This space is not compact and not locally compact. In fact, it is homeomorphic to the irrationals in $\mathbb{R}$.
We have recently studied $\omega$-chaos in this setting \cite{weak}. In a forthcoming paper, we show that a subshift of the Baire Space with the infinite specification property exhibits distributional chaos of type 1 \cite{infspec}.
The dynamical properties of the shift on this space have been examined by Meddaugh and Raines \cite{BaireSpace}.
A well-studied family of shift spaces over finite alphabets is the family of \emph{subshifts of finite type}, which has been extended to shift spaces over countable alphabet \cite{BaireSpace}.
Another family of shift spaces over countable alphabets is the family of \emph{subshifts of bounded type} introduced by Meddaugh and Raines \cite{BaireSpace}. We will consider both subshifts of finite type and subshifts of bounded type over the Baire Space with the shift map.

The  main results in this paper are that subshifts of finite type exhibit dense distributional chaos, and a subshift of bounded type exhibits distributional chaos on the Baire space if it has a dense set of periodic points and is perfect.

\section{Preliminaries}
\label{prelims}
Let $\omega = \mathbb{N} \cup \{0\}$ be endowed with the discrete topology and
let $X = \omega^\omega$ be endowed with the product topology. Notice that the product topology on $X$ is compatible with the metric $$d(x,y)  = \underset{n \in \omega}{\inf}\{2^{-n}: x_i = y_i \: \forall i<n\}$$
\begin{dfn}Let $\sigma: X \to X$ be the usual shift map, i.e. , $\sigma(x_0x_1x_2\dots)=x_1x_2\dots$. Then $(X, \sigma)$
is called the Baire Space.
\end{dfn}
\begin{dfn}
A set $\Gamma \subseteq \omega^\omega$ is a \textbf{subshift} if it is a closed, invariant subset under the shift map $\sigma$.
\end{dfn}
Let $i,j \in \omega$ with $i< j$.
For $x=x_0x_1x_2\dots \in \Gamma$, define $x_{[i,j)}=x_i x_{i+1} \dots x_{j-1}$.
\begin{dfn}
A word $w$ is said to be an \textbf{allowed word} of length $n$ if and only if there exists $x \in \Gamma$ such that $w = x_{[i, i+n)}$ for some $i \in \omega$.
\end{dfn}
The set of allowed words of length $n$ will be denoted by $B_n(\Gamma)$. Furthermore, $B(\Gamma) = \underset{n\in \omega}{\cup} B_n(\Gamma)$ will denote the set of all allowed words. This allows us to define the following.

\begin{dfn}
For $w \in B(\Gamma)$, the \textbf{cylinder set} of $w$ is $[w] = \{x \in X: x_{[0,|w|)} = w\}$ where $|w|$ is the length of $w$.
\end{dfn}

\begin{dfn}
If $w \notin B(\Gamma)$, we will say $w$ is a \textbf{forbidden word}. We say $\mathcal{F}(\Gamma)$ is a \textbf{basis of forbidden words} if for every forbidden word there exists a subword $w_{[i,j)} = v \in  \mathcal{F}(\Gamma)$ with $0 \leq i < j \leq |w|$.
\end{dfn}
Defining the basis of forbidden words enables us to define the two families of subshifts that are of interest.
\begin{dfn}
A subshift $\Gamma$ is said to be a \textbf{subshift of finite type (SFT)} provided $\Gamma$ has a finite basis of forbidden words.
\end{dfn}
\begin{dfn}
A subshift $\Gamma$ is said to be a \textbf{subshift of bounded type (SBT)} provided there exists $K \in \mathbb{N}$ and a basis of forbidden words of $\Gamma$ such that $|w| < K$ for each $w \in \mathcal{F}(\Gamma)$.
\end{dfn}
The following theorem allows us to discern if certain words are allowed when we have a subshift of bounded type.
\begin{theorem}\thlabel{sbt}
Let $\Gamma$ be a subshift of $\omega^\omega$. Then $\Gamma$ is an SBT if there exists $N \in \omega$ such that if $ab,bc \in B(\Gamma)$ and the length of $b$ is at least $N$, then $abc \in B(\Gamma)$ \cite{BaireSpace}.
\end{theorem}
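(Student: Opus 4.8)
The plan is to exhibit an explicit bounded-length basis of forbidden words for $\Gamma$ and verify that it is indeed a basis. The natural candidate is the collection $\mathcal{F}(\Gamma) = \{v : v \notin B(\Gamma),\ |v| \le N+1\}$ of all forbidden words of length at most $N+1$; with the choice $K = N+2$, every member of this set has length strictly less than $K$, so it suffices to show it is a basis. By the definition of a basis of forbidden words, this reduces to the statement that every forbidden word contains a forbidden subword of length at most $N+1$. I would prove the contrapositive: any word $w$, all of whose subwords of length at most $N+1$ lie in $B(\Gamma)$, must itself lie in $B(\Gamma)$.

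First I would establish this last claim by strong induction on the length $n = |w|$. For the base case $n \le N+1$, the word $w$ is a subword of itself of length at most $N+1$, hence allowed by hypothesis. For the inductive step, assume $n \ge N+2$ and that the claim holds for all shorter words. I would split $w = w_0 w_1 \cdots w_{n-1}$ into its prefix $p = w_{[0,n-1)}$ and suffix $s = w_{[1,n)}$, each of length $n-1$. Since every subword of $p$ or of $s$ of length at most $N+1$ is also such a subword of $w$, it is allowed, and so the induction hypothesis gives $p, s \in B(\Gamma)$.

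The key step is to glue $p$ and $s$ back together using the hypothesis of the theorem. Setting $a = w_0$, $b = w_{[1,n-1)}$, and $c = w_{n-1}$, one has $ab = p \in B(\Gamma)$ and $bc = s \in B(\Gamma)$, while $|b| = n-2 \ge N$ precisely because $n \ge N+2$. The gluing hypothesis then yields $abc = w \in B(\Gamma)$, completing the induction. Reading this contrapositively, any forbidden word must contain a forbidden subword of length at most $N+1$, and every such subword belongs to $\mathcal{F}(\Gamma)$; hence $\mathcal{F}(\Gamma)$ is a basis of forbidden words with all lengths less than $K = N+2$, so $\Gamma$ is an SBT.

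I do not expect a genuine obstacle here, as the argument is a clean induction; the only point requiring care is the length bookkeeping, namely that the overlap $b = w_{[1,n-1)}$ has length exactly $n-2$, so that the requirement $|b| \ge N$ is what forces the threshold $n \ge N+2$ and thereby dictates the choice $K = N+2$. The one degenerate situation, where $N = 0$ makes $b$ empty when $n = 2$, is harmless; it can also be sidestepped by observing that the hypothesis for a given $N$ continues to hold for every larger value, so we may assume $N \ge 1$ without loss of generality.
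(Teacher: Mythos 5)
Your proof is correct. The paper itself gives no argument for this statement --- it is quoted from Meddaugh and Raines \cite{BaireSpace} --- but your induction (glue the prefix $w_{[0,n-1)}$ and suffix $w_{[1,n)}$ along the overlap $b=w_{[1,n-1)}$ of length $n-2\ge N$, then read the claim contrapositively to see that every forbidden word contains a forbidden subword of length at most $N+1$) is exactly the standard proof of this fact, and your length bookkeeping, choice of $K=N+2$, and handling of the $N=0$ degeneracy are all sound.
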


The main question of this paper is to determine when these families of subshifts of the Baire Space exhibit distributional chaos.
To define distributional chaos of type 1, we first define the function $$\xi(x,y,\epsilon, n) = |\{i : d(\sigma^i(x), \sigma^i(y)) < \epsilon,  i<n\} |$$ where $|\cdot|$ denotes the cardinality of the set \cite{OPROCHA}.
\begin{dfn}
We say $x,y \in X$ form a \textbf{distributionally chaotic pair of type 1 (DC1 pair)} if
$$\limsup_{n \to \infty} \frac{1}{n} \: \xi(x,y,\epsilon, n) =1$$ for all $\epsilon> 0$ and
$$\liminf_{n \to \infty} \frac{1}{n} \: \xi(x,y,\delta, n) =0$$ for some $\delta> 0$.
\end{dfn}

\begin{dfn}
A set $D \subseteq \Gamma$ is said to be \textbf{distributionally scrambled of type 1} if for every pair $x\neq y \in D$, $x,y$ form a distributionally chaotic pair of type 1. If $D$ can be chosen to be uncountable, $\sigma$ is said to be \textbf{distributionally chaotic of type 1 (DC1)}. If $D$ can be chosen to be uncountable and dense in $\Gamma$, $\sigma$ is said to exhibit \textbf{dense distributional chaos}.
\end{dfn}

\section{Results}
\label{results}
First, we consider subshifts of finite type. The following lemma allows us identify a symbol not found in any forbidden word.
\begin{lemma}\thlabel{lemma1}
Let $\Gamma \subseteq \omega^\omega$ be a subshift of finite type. Then there exists $K \in \mathbb{N}$ such that for all $k\geq K$, $k \notin w$ for all $w \in \mathcal{F}(\Gamma)$.
\end{lemma}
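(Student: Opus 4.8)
The plan is to exploit the single structural feature that distinguishes a subshift of finite type, namely that its basis of forbidden words $\mathcal{F}(\Gamma)$ is a \emph{finite} set, and to combine this with the fact that every word in $\mathcal{F}(\Gamma)$ has finite length. Together these two finiteness conditions force the collection of symbols occurring anywhere in $\mathcal{F}(\Gamma)$ to be a finite subset of $\omega$, and since every finite subset of $\omega$ is bounded above, every sufficiently large symbol must be absent from all forbidden words.

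Concretely, I would first fix a finite basis of forbidden words $\mathcal{F}(\Gamma) = \{w^{(1)}, \dots, w^{(m)}\}$, which exists by the definition of an SFT. For each $w^{(j)}$, let $S(w^{(j)}) = \{w^{(j)}_i : 0 \le i < |w^{(j)}|\} \subseteq \omega$ denote the finite set of symbols appearing in $w^{(j)}$. Next I would form the union $S = \bigcup_{j=1}^{m} S(w^{(j)})$, a finite union of finite sets and hence itself finite. Being a finite subset of $\omega$, the set $S$ admits a maximum element; I would then set $K = \max(S) + 1$ (with the convention $K = 1$, or any fixed element of $\mathbb{N}$, in the degenerate case $S = \varnothing$). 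Since every symbol lies in $\omega$ we have $\max(S) \ge 0$, so $K \in \mathbb{N}$ as required. Finally, for any $k \ge K$ we have $k \notin S$, which is exactly the statement that $k$ fails to occur in $w^{(j)}$ for every $j$, i.e. $k \notin w$ for all $w \in \mathcal{F}(\Gamma)$.

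There is essentially no deep obstacle here; the entire content is the interplay of the two finiteness hypotheses, and the only points demanding a little care are bookkeeping ones. In particular, I would be sure to address the degenerate case $\mathcal{F}(\Gamma) = \varnothing$ (an SFT with no forbidden words, for which the conclusion holds vacuously for any $K$), and to note that the basis $\mathcal{F}(\Gamma)$ appearing in the definition of an SFT need not be unique, so the constant $K$ is tied to a particular choice of finite basis. Neither observation affects the argument: any finite basis yields a finite symbol set $S$ and hence a valid bound $K$.
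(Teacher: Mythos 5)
Your proposal is correct and follows essentially the same route as the paper: both arguments collect the finitely many symbols occurring in the finite basis $\mathcal{F}(\Gamma)$ and take $K$ to be one more than their maximum. Your extra attention to the degenerate case $\mathcal{F}(\Gamma)=\varnothing$ and to the non-uniqueness of the basis is careful but does not change the substance of the argument.
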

\begin{proof}
Let $\Gamma \subseteq \omega^\omega$ be a subshift of finite type.
As $\Gamma$ is a SFT, $\mathcal{F}(\Gamma)$ is finite. Let $$K = 1+ \max \: \{l\in \omega : l = w_i, w=w_0w_1\dots w_{|w|} \in \mathcal{F}(\Gamma)\} $$
Now, if $k \geq K$, then $k > \max \: \{l\in \omega : l = w_i, w \in \mathcal{F}(\Gamma)\}$.
So, $k > l = w_i$ for each $w \in \mathcal{F}(\Gamma)$ and $i \in \{0,1,\dots, |w|\}$.
\end{proof}
In particular, this lemma allows us to identify allowed words. For example, if $a,b \in B(\Gamma)$ and $K$ as in Lemma \ref{lemma1} then $aK^nb \in B(\Gamma)$ is an allowed word for all $n \in \mathbb{N}$. This result aids us in proving Theorem \ref{cyl}.
\begin{theorem}\thlabel{cyl} Let $\Gamma \subseteq \omega^\omega$ be a subshift of finite type. Let $w$ be an allowed word of $\Gamma$. Then the cylinder set, $[w]$, contains an uncountable distributionally scrambled set of type 1.
\end{theorem}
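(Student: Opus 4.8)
The plan is to build the scrambled set by direct block concatenation, exploiting the free symbols supplied by Lemma~\ref{lemma1}. Fix $K$ as in that lemma, so that neither $K$ nor $K+1$ occurs in any forbidden word. The key consequence is that any sequence that begins with the given allowed word $w$ and continues with symbols drawn from $\{K,K+1\}$ lies in $[w]\subseteq\Gamma$: a forbidden subword would have to avoid every $K$ and $K+1$, hence lie entirely inside $w$, which is allowed and so contains no forbidden subword; since every finite subword is therefore allowed, the whole sequence belongs to $\Gamma$. I will index the scrambled set by the uncountable set $\{0,1\}^{\omega}$ and produce an injection $s\mapsto x^{s}$.

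Each $x^{s}$ will be an interleaving $x^{s}=w\,A_{1}B_{1}^{s}A_{2}B_{2}^{s}A_{3}B_{3}^{s}\cdots$ of two kinds of blocks. The \emph{agreement blocks} $A_{k}=K^{\ell_{k}}$ are common to all $s$; they force any two points $x^{s},x^{t}$ to coincide on long stretches and will drive the condition $\limsup_{n}\tfrac1n\xi=1$. The \emph{separation blocks} encode the index: fixing a map $\nu\colon\mathbb{N}\to\mathbb{N}$ that attains every value infinitely often, I set $B_{k}^{s}=K^{m_{k}}$ when $s_{\nu(k)}=0$ and $B_{k}^{s}=(K+1)^{m_{k}}$ when $s_{\nu(k)}=1$. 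If $s\neq t$ then $s_{i}\neq t_{i}$ for some $i$, and for each of the infinitely many $k$ with $\nu(k)=i$ the blocks $B_{k}^{s}$ and $B_{k}^{t}$ differ in \emph{every} one of their $m_{k}$ coordinates; this is what will drive $\liminf_{n}\tfrac1n\xi=0$. The lengths $\ell_{k},m_{k}$ are chosen recursively so that each block is longer than the total length of everything preceding it (for instance, at least $j$ times the running total at stage $j$), so that any single block dominates the position of its left endpoint. Injectivity of $s\mapsto x^{s}$ is immediate, since distinct indices produce a differing separation block.

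It then remains to verify the two density statements for an arbitrary pair $x^{s}\neq x^{t}$. For the $\limsup$: given $\epsilon>0$ pick $m$ with $2^{-m}<\epsilon$; for every position $i$ in the first $\ell_{k}-m$ coordinates of an agreement block $A_{k}$ the window $[i,i+m)$ sits inside $A_{k}$, where both points read $K$, so $d(\sigma^{i}x^{s},\sigma^{i}x^{t})<\epsilon$. Taking $n$ to be the right endpoint of $A_{k}$ and using the domination of $\ell_{k}$ drives the proportion of close times to $1$, so the $\limsup$ equals $1$ for every $\epsilon$. For the $\liminf$: take $\delta=\tfrac12$, so that $d(\sigma^{i}x^{s},\sigma^{i}x^{t})\ge\delta$ exactly when $x^{s}_{i}\neq x^{t}_{i}$. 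Choosing a separation block with $B_{k}^{s}\neq B_{k}^{t}$ (possible for infinitely many $k$ since $s\neq t$), every coordinate inside it is a ``far'' coordinate, so the only close times before its right endpoint $n$ lie to its left; domination of $m_{k}$ pushes their proportion to $0$ along this subsequence. I expect the main obstacle to be precisely the uniformity in this second step: it is easy to separate a generic pair, but $\liminf=0$ must hold for every pair in an uncountable family simultaneously, and the device making this work is the many-to-one map $\nu$, which guarantees that a single disagreeing index bit resurfaces in infinitely many total-disagreement blocks. The remaining bookkeeping—membership in $[w]$ and the elementary density estimates—is then routine.
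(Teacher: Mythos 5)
Your proposal is correct and is essentially the paper's own argument: both constructions interleave long constant blocks of the symbol $K$ (guaranteed safe by Lemma~\ref{lemma1}) with long coding blocks over $\{K,K+1\}$ indexed by an uncountable family, with each block's length dominating the total length preceding it, so that the agreement blocks force $\limsup\frac1n\xi(\cdot,\cdot,\epsilon,n)=1$ and the totally-disagreeing blocks force $\liminf\frac1n\xi(\cdot,\cdot,\delta,n)=0$. Your map $\nu$ hitting every index infinitely often plays exactly the role of the paper's $\hat{x}=x_0Kx_0x_1Kx_0x_1x_2K\dots$, which likewise makes each coordinate of the index recur in infinitely many blocks.
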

\begin{proof}
Let $w \in B(\Gamma)$ and let $s_0 = 1, m_j = 1+|w|+ \overset{j}{\underset{n=0}{\sum}} s_n,$ and $s_{j+1} = 2^{j+1}m_j $ for all $j \in \omega$.

For all $j,k \in \omega$, define $u(k,j) = k^{s_j}$. Let $K \in \omega$ be as in Lemma \ref{lemma1}. For each $x = x_0x_1x_2 \dots \in \omega^\omega$, define
\begin{align*}
    \hat{x} &:= x_0Kx_0x_1Kx_0x_1x_2K\dots \\
    \quad &\: = \hat{x}_0 \hat{x}_1 \hat{x}_2 \hat{x}_3 \dots
\end{align*}

Define $$D_w = \{\Tilde{x}= w K u(\hat{x}_0,0)u(\hat{x}_1,1)u(\hat{x}_2,2) \dots: x \in \Gamma \text{ and } \hat{x} = \hat{x}_0 \hat{x}_1 \hat{x}_2 \dots \}.$$
Notice that for each $\Tilde{x} \in D_w$, we have the following relationship between $\{s_j\}_{j\in \omega}$, $\{m_j\}_{j\in \omega}$ and the length of the segments of $\Tilde{x}$
$$\Tilde{x}=\underbracket{\underbracket{\underbracket{\underbracket{wK\overbracket{u(\hat{x}_0,0)}^{s_0}}_{m_0}\overbracket{u(\hat{x}_1,1)}^{s_1}}_{m_1} \overbracket{u(\hat{x}_2,2)}^{s_2}}_{m_2}\dots\overbracket{u(\hat{x}_j,j)}^{s_j}}_{m_j}\dots$$

Clearly, $D_w \subseteq [w]$. We show $D_w$ is DC1.

Let $\Tilde{x} \neq \Tilde{y} \in D_w$.
By construction, there exists an increasing sequence $\{\nu_j\}_{j\in\omega}$ such that $\hat{x}_{\nu_j}= K =\hat{y}_{\nu_j}$ for all $j \in \omega$.

Let $q \in \mathbb{N}$ and choose $J$ such that $m_{\nu_j}-m_{\nu_{j-1}} > q$ for all $j\geq J$.
Then for each $m_{\nu_{j-1}}\leq i \leq m_{\nu_j}-q$ $\Tilde{x}_{[i,i+q)}$ is a subword of $u(\hat{x}_{\nu_j},\nu_j)$ of length $q$.
Thus, $\Tilde{x}_{[i,i+q)}= K^q$.
Similarly, $\Tilde{y}_{[i,i+q)}= K^q$.
Hence $$|\{i: \Tilde{x}_{[i,i+q)} = \Tilde{y}_{[i,i+q)} \text{ for } i < m_{\nu_j}\}| \geq |u(\hat{x}_{\nu_j},\nu_j)|-(q+1).$$
Therefore,
\begin{align*}
    \frac{1}{m_{\nu_j}}\xi(\Tilde{x},\Tilde{y}, \frac{1}{2^q}, m_{\nu_j}) &= \frac{1}{m_{\nu_j}} |\{i: \Tilde{x}_{[i,i+q)}=\Tilde{y}_{[i,i+q)} \text{ for } i < m_{\nu_j}\}|\\
    \quad &\geq \frac{|u(\hat{x}_{\nu_j},\nu_j)|-(q+1)}{m_{\nu_j}} \\
    \quad &= \frac{s_{\nu_j}-(q+1)}{m_{\nu_j}} \\
    \quad &=\frac{2^{\nu_j}m_{\nu_j-1}}{m_{\nu_j} } - \frac{ q+1}{m_{\nu_j}}\\
    \quad &=\frac{2^{\nu_j}m_{\nu_j-1}}{1 + |w|+\overset{\nu_j}{\underset{n=0}{\sum}} s_n} - \frac{ q+1}{m_{\nu_j}}\\
    \quad &=\frac{2^{\nu_j}m_{\nu_j-1}}{s_{\nu_j}+( 1 +|w|+\overset{\nu_j-1}{\underset{n=0}{\sum}} s_n )} - \frac{ q+1}{m_{\nu_j}}\\
    \quad &=\frac{2^{\nu_j}m_{\nu_j-1}}{2^{\nu_j}m_{\nu_j-1} +m_{\nu_j-1} } - \frac{q+1}{m_{\nu_j}}\\
    \quad &=\frac{2^{\nu_j}m_{\nu_j-1}}{m_{\nu_j-1}(2^{\nu_j}+1) } - \frac{q+1}{m_{\nu_j}}\\
    \quad &=\frac{2^{\nu_j}}{2^{\nu_j}+1 } - \frac{q+1}{m_{\nu_j}}\\
    & \to 1 \text{ as } j \to \infty
\end{align*}
As $\Tilde{x} \neq \Tilde{y} \in D_w$, $x \neq y$.
Thus $x_l \neq y_l$ for some $l \in \omega$. So, there exists an increasing sequence $\{\mu_j\}_{j\in\omega}$ such that $\hat{x}_{\mu_j}=x_l\neq y_l=\hat{y}_{\mu_j}$ for all $j \in \omega$.
So, $ \Tilde{x}_{i}\neq \Tilde{y}_{i}$ for $m_{\mu_j -1}  \leq i < m_{\mu_j}$.

Thus,
\begin{align*}
    \frac{1}{m_{\mu_j}}\xi(\Tilde{x},\Tilde{y}, 1, m_{\mu_j}) &= \frac{1}{m_{\mu_j}} |\{i: \Tilde{x}_{i}= \Tilde{y}_{i} \text{ for } i < m_{\mu_j}\}|\\
    \quad &\leq \frac{1+|w|+\overset{\mu_j-1}{\underset{n=0}{\sum}} |u(\hat{x}_{n},n)|}{m_{\mu_j}} \\
    \quad &= \frac{\overset{\mu_j-1}{\underset{n=0}{\sum}} s_n}{m_{\mu_j}} +\frac{1+|w|}{m_{\mu_j}}\\
    \quad &=\frac{m_{\mu_j-1}}{m_{\mu_j-1}(1+2^{\mu_j})}+\frac{1+|w|}{m_{\mu_j}}\\
    \quad &=\frac{1}{1+2^{\mu_j}}+\frac{1+|w|}{m_{\mu_j}}\\
    & \to 0 \text{ as } j \to \infty
\end{align*}
Thus, $\Tilde{x},\Tilde{y}$ form a DC1 pair.
Hence, $D_w$ is a distributionally scrambled set of type 1.

We are left to show $D_w$ is uncountable. As $2^\omega$ is uncountable, so is $\{K,K+1\}^\omega \subseteq \Gamma$. Let $\varphi: \{K,K+1\}^\omega \to D_w$ be defined by $\varphi(x) = w K u(\hat{x}_0,0)u(\hat{x}_1,1)u(\hat{x}_2,2) \dots$.
Suppose $\varphi(x) = \varphi(y)$. Then, $u(\hat{x}_j,j) = u(\hat{y}_j,j)$ for all $j \in \omega$. Hence $x_j = y_j$ for all $j \in \omega$ and $x = y$ as desired.
\end{proof}
Thus, the shift map on a subshift of finite type exhibits distributional chaos. To show that a subshift of finite type exhibits dense distributional chaos, we need the following lemmas.
\begin{lemma}\thlabel{lemma2} Let $K$ be as in Lemma \ref{lemma1}. Choose $g,h,p \in \mathbb{N}$.
Let $x \in \{2(K+g),2(K+g)+p\}^\omega$ and $y \in \{2(K+h),2(K+h)+p\}^\omega$ where $x,y$ are not fixed points. That is, $x\not\in \{ (2(K+g))^\infty, (2(K+g)+p)^\infty\}$ and $y\not\in \{(2(K+h))^\infty, (2(K+h)+p)^\infty\}$. If $g \neq h$, then $x \neq y$. That is, there exists $\gamma \in \omega$ such that $x_\gamma \neq y_\gamma$.
\end{lemma}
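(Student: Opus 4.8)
The plan is to argue by contradiction, exploiting the elementary fact that over a two-letter alphabet a non-constant sequence must contain \emph{both} of its letters. First I would record that since $p \in \mathbb{N}$ we have $p \geq 1$, so the two symbols $2(K+g)$ and $2(K+g)+p$ are genuinely distinct, and likewise $2(K+h)$ and $2(K+h)+p$ are distinct. This is precisely what makes the hypothesis that $x$ and $y$ are not fixed points meaningful: each alphabet really does have two members, so ``not constant'' will mean ``uses both members.''

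Suppose, toward a contradiction, that $x = y$. Since $x$ is not a fixed point and takes values in the two-letter alphabet $\{2(K+g),\,2(K+g)+p\}$, it is non-constant, so there are indices $i,j \in \omega$ with $x_i = 2(K+g)$ and $x_j = 2(K+g)+p$; that is, both letters actually occur in $x$. Because $x = y$, these same values occur in $y$, whence both $2(K+g)$ and $2(K+g)+p$ lie in the alphabet $\{2(K+h),\,2(K+h)+p\}$ of $y$. As the left-hand pair consists of two distinct numbers contained in a set of size at most two, I would conclude the set equality
$$\{2(K+g),\,2(K+g)+p\} = \{2(K+h),\,2(K+h)+p\}.$$

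The remaining step is a short case analysis on which element matches which. Either $2(K+g) = 2(K+h)$, which immediately gives $g = h$; or else $2(K+g) = 2(K+h)+p$ together with $2(K+g)+p = 2(K+h)$, and adding these two equations yields $4(K+g) = 4(K+h)$, again forcing $g = h$ (equivalently, subtracting them gives $p = -p$, impossible since $p \geq 1$). In every case $g = h$, contradicting the hypothesis $g \neq h$. Hence $x \neq y$, and therefore some coordinate $\gamma \in \omega$ satisfies $x_\gamma \neq y_\gamma$, as claimed.

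I do not expect a genuine obstacle here; the only point requiring care is recognizing why the non-fixed-point hypothesis cannot be dropped. Without it one could take the constant sequences $x = (2(K+g))^\infty$ and $y = (2(K+h)+p)^\infty$, which coincide whenever $2(K+g) = 2(K+h)+p$, and this does occur with $g \neq h$ (for instance whenever $p$ is even). Non-constancy is exactly what upgrades the pointwise equality $x=y$ into the set equality displayed above, and that set equality is what rigidly pins down $g = h$.
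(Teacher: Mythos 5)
Your proof is correct and follows essentially the same route as the paper's: both arguments use the non-constancy of $x$ to guarantee that both letters $2(K+g)$ and $2(K+g)+p$ actually occur, and then match these against the alphabet of $y$ to force either $g=h$ or $p=0$, a contradiction. Your packaging of this as a set equality $\{2(K+g),\,2(K+g)+p\} = \{2(K+h),\,2(K+h)+p\}$ followed by a symmetric two-case analysis is a slightly cleaner presentation of the same idea, but not a genuinely different argument.
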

\begin{proof} Let $g \neq h \in \mathbb{N}.$
Suppose $x \in \{2(K+g),2(K+g)+p\}^\omega$ and $y \in \{2(K+h),2(K+h)+p\}^\omega$ with $x\not\in \{(2(K+g))^\infty, (2(K+g)+p)^\infty\}$ and $y\not\in \{(2(K+h))^\infty, (2(K+h)+p)^\infty\}$.

Suppose toward contradiction $x = y$. As $x\neq (2(K+g))^\infty$ and $x\neq (2(K+g)+p)^\infty$, there exists $m,n \in \omega$ such that $x_m = 2(K+g)$ and $x_n = 2(K+g)+p$. By assumption $g \neq h$, so $y_m = 2(K+h)+p$. This gives us $2(K+g) = x_m = y_m = 2(K+h)+p $ and hence $x_n = 2(K+g)+p = 2(K+h)+2p$. As $g \neq h$, we also have $y_n = 2(K+h)$. Thus, $2(K+h) + 2p = x_n = y_n = 2(K+h)$. Therefore, $p=0$. This is a contradiction.
\end{proof}

\begin{lemma}\thlabel{lemma3}
Let $K$ be as in Lemma \ref{lemma1}. Choose $g,h,p,q \in \mathbb{N}$.
Let $x \in \{2(K+g),2(K+g)+p\}^\omega$ and $y \in \{2(K+h),2(K+h)+q\}^\omega$ where $x,y$ are not fixed points. That is, $x\not\in \{(2(K+g))^\infty, (2(K+g)+p)^\infty\}$ and $y\not\in \{(2(K+h))^\infty, (2(K+h)+q)^\infty\}$. If $p \neq q$, then $x \neq y$. That is, there exists $\gamma \in \omega$ such that $x_\gamma \neq y_\gamma$.
\end{lemma}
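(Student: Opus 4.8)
The plan is to argue by contradiction in exactly the same spirit as the proof of Lemma \ref{lemma2}, the only new feature being that the two candidate sequences now carry possibly different increments $p$ and $q$. Assume $x = y$. Since $x$ is not a fixed point, it equals neither $(2(K+g))^\infty$ nor $(2(K+g)+p)^\infty$, so there exist indices $m,n \in \omega$ with $x_m = 2(K+g)$ and $x_n = 2(K+g)+p$. Under the assumption $x = y$ we then have $y_m = 2(K+g)$ and $y_n = 2(K+g)+p$, and both of these values must lie in the two-symbol alphabet $\{2(K+h),\,2(K+h)+q\}$ of $y$. The whole argument reduces to matching these two forced values against that alphabet.

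The core is a short case split on whether $g = h$. If $g = h$, then $2(K+g)$ already coincides with $2(K+h)$, so the requirement $y_n = 2(K+g)+p \in \{2(K+h),\,2(K+h)+q\}$, together with $p \geq 1$, forces $2(K+g)+p = 2(K+g)+q$, i.e. $p = q$, contradicting $p \neq q$. If instead $g \neq h$, then $2(K+g) \neq 2(K+h)$, so $y_m = 2(K+g)$ can only equal $2(K+h)+q$, giving the relation $2(K+g) - 2(K+h) = q$. Matching $y_n = 2(K+g)+p$ against the same two-element set then leaves the two possibilities $2(K+g)+p = 2(K+h)$ or $2(K+g)+p = 2(K+h)+q$; substituting the relation above into each yields $q = -p$ or $p = 0$ respectively, both impossible since $p,q \in \mathbb{N}$ are strictly positive. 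In every case we obtain a contradiction, so $x \neq y$, with the witnessing coordinate $\gamma$ being any index at which the two sequences disagree.

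I do not anticipate a genuine obstacle, as the statement is elementary; the points needing care are purely bookkeeping. First, the subcase $g = h$ must be isolated, because when the bases agree it is precisely the inequality $p \neq q$ (rather than a base mismatch) that separates the sequences, so this case cannot be absorbed into the generic argument of Lemma \ref{lemma2}. Second, the positivity of $p$ and $q$ is used twice, to rule out both $q = -p$ and $p = 0$, and the hypothesis that $x$ and $y$ are not fixed points is exactly what guarantees that both alphabet symbols of $x$ actually occur, furnishing the indices $m$ and $n$. A more conceptual phrasing is available and may be worth a remark: a non-constant binary sequence determines its two-element value set, whose elements differ by $p$ for $x$ and by $q$ for $y$, so $x = y$ would force equal gaps and hence $p = q$.
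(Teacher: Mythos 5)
Your proof is correct and follows essentially the same route as the paper's: assume $x=y$, use the non-fixed-point hypothesis to produce indices $m,n$ realizing both symbols of $x$'s alphabet, and match the forced values $2(K+g)$ and $2(K+g)+p$ against $\{2(K+h),2(K+h)+q\}$ to derive $p=q$, $p=0$, or $p+q=0$. The only cosmetic difference is that you organize the case split by whether $g=h$ while the paper splits on which alphabet symbol $y_m$ equals; these are equivalent bifurcations.
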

\begin{proof}
Let $p \neq q \in \mathbb{N}.$
Suppose $x \in \{2(K+g),2(K+g)+p\}^\omega$ and $y \in \{2(K+h),2(K+h)+q\}^\omega$ with $x\not\in \{(2(K+g))^\infty, (2(K+g)+p)^\infty\}$ and $y\not\in \{(2(K+h))^\infty, (2(K+h)+q)^\infty\}$.
Suppose toward contradiction $x=y$. As $x\neq (2(K+g))^\infty$ and $x\neq (2(K+g)+p)^\infty$, there exists $m,n \in \omega$ such that $x_m = 2(K+g)$ and $x_n = 2(K+g)+p$.

Case 1: $y_m =2(K+h)$. Then $g=h$ as $x_m = y_m$. So, $y_n = x_n = 2(K+g)+p = 2(K+h)+p$. As $p\neq q$, we have $y_n = 2(K+h)$. Thus, $p=0$ This is a contradiction.

Case 2: $y_m = 2(K+h)+q$. Then $y_n = x_n = 2(K+g)+p = x_m + p =2(K+h)+q+p$. As $x_m \neq x_n$, we also have $y_n = 2(K+h)$. Thus, $p+q = 0$. Hence, $p,q =0$. This is a contradiction.
\end{proof}

 Now, we are ready to show that the shift map on a subshift of finite type exhibits dense distributional chaos.

\begin{theorem}
Let $\Gamma \subseteq \omega^\omega$ be a subshift of finite type. $\sigma$ exhibits dense distributional chaos.
\end{theorem}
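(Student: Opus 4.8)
The plan is to reduce everything to Theorem \ref{cyl} after upgrading its construction so that a \emph{single} scrambled set simultaneously meets every cylinder. Since $B(\Gamma)$ is a countable set of finite words, the cylinders $\{[v]\cap\Gamma : v\in B(\Gamma)\}$ form a countable basis for the topology of $\Gamma$; hence a set $E\subseteq\Gamma$ is dense precisely when $E\cap[v]\neq\varnothing$ for every $v\in B(\Gamma)$. Thus it suffices to produce one uncountable distributionally scrambled set $E$ that contains a point of each cylinder $[v]$.

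The naive attempt---taking $\bigcup_{v}D_v$ with $D_v$ as in Theorem \ref{cyl}---fails, because there the block lengths $s_j,m_j$ depend on $|v|$, so two points coming from cylinders of different lengths run on different ``clocks'' and their $K$-blocks drift out of alignment. I would remove this dependence by re-running the construction with one universal clock. Fix $s_0=1$, $m_j=\sum_{n=0}^{j}s_n$, and $s_{j+1}=2^{j+1}m_j$, none of which reference any word. For $v\in B(\Gamma)$ with $\ell=|v|$ let $a$ be least with $m_a>\ell$, and for $z\in\{K,K+1\}^\omega$ define, with $\hat z$ the $K$-decorated sequence of Theorem \ref{cyl},
\[
P(v,z)=v\,K^{\,m_a-\ell}\,u(\hat z_{a+1},a+1)\,u(\hat z_{a+2},a+2)\,u(\hat z_{a+3},a+3)\cdots .
\]
Lemma \ref{lemma1} together with the remark that $aK^{n}b\in B(\Gamma)$ shows $P(v,z)\in[v]\cap\Gamma$, and by construction every $P(v,z)$ follows the same block boundaries $\{m_j\}$ once past its prefix.

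With the universal clock both estimates of Theorem \ref{cyl} go through for any pair $P(v,z)$, $P(v',z')$, the bounded initial discrepancy between the two prefixes being negligible in every density limit. The positions of the separator symbol $K$ inside $\hat z$ are the same for all $z$, so on infinitely many aligned blocks both points read $K^{s_{\nu_j}}$, and the computation giving $\tfrac{2^{\nu_j}}{2^{\nu_j}+1}\to1$ yields $\limsup$ agreement density $=1$ for \emph{every} pair. On the other hand, if $z\neq z'$ then $\hat z$ and $\hat z'$ differ at infinitely many positions (a single discrepancy $z_i\neq z_i'$ already recurs in every block indexed $\geq i$), and on each such aligned block the two points read constant but distinct symbols; the computation giving $\tfrac{1}{1+2^{\mu_j}}\to0$ yields $\liminf$ agreement density $=0$. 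Hence $P(v,z),P(v',z')$ form a DC1 pair whenever $z\neq z'$.

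It remains to assemble $E$. Enumerate $B(\Gamma)=\{v^{(n)}\}_{n\in\omega}$ and choose pairwise distinct sequences $\{z^{(n)}\}_{n\in\omega}\cup\{w^{(\alpha)}\}_{\alpha\in A}$ in $\{K,K+1\}^\omega$ with $A$ uncountable, which is possible since $\{K,K+1\}^\omega$ is uncountable. Set
\[
E=\{P(v^{(n)},z^{(n)}):n\in\omega\}\cup\{P(v^{(0)},w^{(\alpha)}):\alpha\in A\}.
\]
Then $E$ is uncountable since $A$ is; it meets every cylinder $[v^{(n)}]$, hence is dense; and because all the data sequences above are pairwise distinct, any two distinct members of $E$ have distinct data and so form a DC1 pair by the previous paragraph. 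Thus $E$ is an uncountable dense distributionally scrambled set, and $\sigma$ exhibits dense distributional chaos. The main obstacle is exactly the alignment of cross-cylinder pairs: the construction must be rigid enough that points from different cylinders share block boundaries, yet flexible enough to plant a prescribed word at the front; and the degenerate case of equal data attached to different words---whose tails would eventually coincide and destroy $\liminf=0$---must be ruled out, both of which are handled by the universal clock together with the pairwise-distinct choice of data.
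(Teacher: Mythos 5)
Your proof is correct, but it assembles the dense scrambled set differently from the paper. The paper also takes a union of one scrambled set per allowed word, but it makes \emph{every} cylinder carry an uncountable piece $D_{w^{p,g}}$, built from data sequences over the word-dependent alphabet $\{2(K+g),2(K+g)+p\}$; cross-cylinder pairs are then handled by Lemma \ref{lemma2} and Lemma \ref{lemma3} (which force the underlying data sequences to differ whenever the words differ) together with an explicit offset computation, since the prefixes $w^{p,g}K$ have different lengths and the block boundaries of the two points are shifted by $p-q$. You instead pad each prefix with $K$'s out to a universal block boundary $m_a$, so that all points in your family share the same absolute block structure; this lets you quote the two density estimates of Theorem \ref{cyl} verbatim for every pair, and it eliminates both the alphabet-separation lemmas and the offset case entirely. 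The price is that you must rule out by hand the degenerate situation of equal data attached to different words (whose tails would coincide), which you do by choosing all data sequences pairwise distinct and putting uncountably many points into only one cylinder while each remaining cylinder receives a single witness. Both arguments are valid: the paper's yields the stronger conclusion that every nonempty cylinder already contains an uncountable scrambled set, whereas yours is leaner, needing only Lemma \ref{lemma1}, the remark following it, and the computations already carried out in Theorem \ref{cyl}.
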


\begin{proof}
Let $s_0 = 1, m_j = \overset{j}{\underset{n=0}{\sum}} s_n,$ and $s_{j+1} = 2^{j+1}m_j$ for all $j \in \omega$.

For all $k \in \omega$, define $u(k,j) = k^{s_j}$.
By Lemma \ref{lemma1}, there exists $K \in \omega$ such that for all $k \geq K$, $k \notin w$ if $w \in \mathcal{F}(\Gamma)$. For each $x = x_0x_1x_2 \dots \in \omega^\omega$, define $$\hat{x} = x_0Kx_0x_1Kx_0x_1x_2K\dots$$
For each $p \in \omega$, $\omega^p$ is countable. Thus, $B_p(\Gamma)$ is countable. Hence, $B_p(\Gamma)$ can be enumerated by $B_p(\Gamma)= \{w^{p,g}\}_{g \in \omega}$.
For each $g \in \omega$, we define $r_g = 2(K+g)$ and 
$$D_{w^{p,g}} = \{\Tilde{x}=w^{p,g}Ku(\hat{x}_0,0)u(\hat{x}_1,1)\dots : x \in \{r_g, r_g+p\}^\omega \setminus \{(r_g)^\infty, (r_g+p)^\infty\} \}$$
Notice that for each $\Tilde{x} \in D_{w^{p,g}}$, we have the following relationship between $\{s_j\}_{j\in \omega}$, $\{m_j\}_{j\in \omega}$ and the length of the segments of $\Tilde{x}$
$$\Tilde{x}=\underbracket{\underbracket{\underbracket{\underbracket{\underbracket{w^{p,g}K}_{p+1}\overbracket{u(\hat{x}_0,0)}^{s_0}}_{p+1+m_0}\overbracket{u(\hat{x}_1,1)}^{s_1}}_{p+1+m_1} \overbracket{u(\hat{x}_2,2)}^{s_2}}_{p+1+m_2}\dots\overbracket{u(\hat{x}_j,j)}^{s_j}}_{p+1+m_j}\dots$$

We now show $D = \underset{p \in \omega}{\cup}\underset{g \in \omega}{\cup} D_{w^{p,g}}$ is a dense DC1 set.
Let $\Tilde{x} \in D_{w^{p,g}}$ and $\Tilde{y} \in D_{w^{q,h}}$ such that $\Tilde{x} \neq \Tilde{y}$. We will show $\Tilde{x}, \Tilde{y}$ form a DC1 pair.

Case 1: $p = q$. That is, $|w^{p,g}| = |w^{q,h}| = p$. By construction, there exists an increasing sequence $\{\nu_j\}_{j\in\omega}$ such that $\hat{x}_{\nu_j}= K =\hat{y}_{\nu_j}$ for all $j \in \omega$.

\begin{itemize}
\item[]
\begin{itemize}
        \item[Subcase 1:] $w^{p,g}=w^{p,h}$. Then, $i = j$. As $\Tilde{x} \neq \Tilde{y}$, $x \neq y$. That is, $x_\gamma \neq y_\gamma$ for some $\gamma \in \omega$. Hence, there exists an increasing sequence $\{\mu_j\}_{j\in\omega}$ such that $\hat{x}_{\mu_j}\neq\hat{y}_{\mu_j}$ for all $j \in \omega$.
        \item[Subcase 2:] $w^{p,g} \neq w^{p,h}$. Then, $i \neq j$. So, $x \in (2(K+g),2(K+g)+p)^\omega$ and $y \in (2(K+h),2(K+h)+p)^\omega$ where $x,y$ are not fixed points.
By Lemma \ref{lemma2}, there exists $\gamma \in \omega$ such that $x_\gamma \neq y_\gamma$. Hence, there exists an increasing sequence $\{\mu_j\}_{j\in\omega}$ such that $\hat{x}_{\mu_j}\neq\hat{y}_{\mu_j}$ for all $j \in \omega$.
\end{itemize}
\end{itemize}

Now, let $t \in \mathbb{N}$ and choose $J$ large enough so that $m_{\nu_j} - m_{\nu_{j-1}}>t$ for all $j \geq J$.
Notice $$\frac{1}{p+1+m_{\nu_j}}\xi(\Tilde{x},\Tilde{y}, \frac{1}{2^t}, p+1+m_{\nu_j}) $$ equals $$ \frac{1}{p+1+m_{\nu_j}} |\{i: \Tilde{x}_{[i,i+t)}=\Tilde{y}_{[i,i+t)} \text{ for } i < p+1+m_{\nu_j}\}|.$$
Thus,
\begin{align*}
    \frac{1}{p+1+m_{\nu_j}}\xi(\Tilde{x},\Tilde{y}, \frac{1}{2^t}, p+1+m_{\nu_j}) &\geq \frac{1}{p+1+m_{\nu_j}}\big(|u(\hat{x}_{\nu_j},\nu_j)|-(t+1)\big) \\
    \quad &= \frac{s_{\nu_j}-(t+1)}{p+1+m_{\nu_j}} \\
    \quad &=\frac{2^{\nu_j}m_{\nu_j-1}}{m_{\nu_j-1}(1 + 2^{\nu_j}+ \frac{p+1}{m_{\nu_j-1}}) } - \frac{t+1}{p+1+m_{\nu_j}}\\
    \quad &=\frac{2^{\nu_j}}{1 + 2^{\nu_j}+ \frac{p+1}{m_{\nu_j-1}}} - \frac{t+1}{p+1+m_{\nu_j}}\\
    & \to 1 \text{ as } j \to \infty
\end{align*}
As $$\frac{1}{p+1+m_{\mu_j}}\xi(\Tilde{x},\Tilde{y}, 1, p+1+m_{\mu_j})$$ equals $$ \frac{1}{p+1+m_{\mu_j}} |\{i: \Tilde{x}_{i}= \Tilde{y}_{i} \text{ for } i <p+1+ m_{\mu_j}\}|,$$
we have 
\begin{align*}
    \frac{1}{p+1+m_{\mu_j}}\xi(\Tilde{x},\Tilde{y}, 1, p+1+m_{\mu_j}) &\leq \frac{p+1+\overset{\mu_j-1}{\underset{n=0}{\sum}} |u(\hat{x}_{n},n)|}{p+1+m_{\mu_j}} \\
    \quad &= \frac{\overset{\mu_j-1}{\underset{n=0}{\sum}} s_n}{p+1+m_{\mu_j}} + \frac{p+1}{p+1+m_{\mu_j}} \\
    \quad &=\frac{m_{\mu_j-1}}{m_{\mu_j-1}(1+2^{\mu_j}+\frac{p+1}{m_{\mu_j-1}})}+ \frac{p+1}{p+1+m_{\mu_j}} \\
    \quad &=\frac{1}{1+2^{\mu_j}+\frac{p+1}{m_{\mu_j-1}}}+ \frac{p+1}{p+1+m_{\mu_j}} \\
    & \to 0 \text{ as } j \to \infty
\end{align*}
Thus in the case that $p=q$, $\Tilde{x},\Tilde{y}$ form a DC1 pair.

Case 2: $p \neq q$. That is, $|w^{p,g}| \neq |w^{q,h}|$. Without loss of generality, assume $p > q$. Then, $p= q + a$ for some $a$. Notice there exists $B$ such that for all $b \geq B$, $s_b > a$.
Notice for all $b \geq B$ we have $$\Tilde{x}_{[p + 1 + m_{b-1}, p + 1 + m_{b})} = u(\hat{x}_b, b)=\hat{x}_b^{s_b}$$ and $$\Tilde{y}_{[q + 1 + m_{b-1}, q + 1 + m_{b})} = u(\hat{y}_b, b)= \hat{y}_b^{s_b}$$ Thus, for all $b \geq B$ $$\Tilde{x}_{[p + 1 + m_{b-1}, p + 1 + m_{b}-a)} = \hat{x}_b^{s_b - a}$$
As $p > q$, we have $$p+1+m_{b-1} > q+1 + m_{b-1}$$
The choice of $b > B$ guarantees $$p+1+m_{b}-a > p+1+m_{b} - s_b = p+1+m_{b-1}$$ as $s_b > a$.
Lastly, we have $$p+1+m_{b} - a = q+a+1+m_{b}-a =q+1+m_{b}$$ as $p=q+a$.

Thus, for all $b \geq B$ $$\Tilde{y}_{[p + 1 + m_{b-1}, p + 1 + m_{b}-a)} = \hat{y}_b^{s_b - a}$$
So, by construction, there exists an increasing sequence $\{\nu_j\}_{j\in\omega}$ such that $\hat{x}_{\nu_j}= K =\hat{y}_{\nu_j}$ for all $j \in \omega$.
As $p \neq q$, by Lemma \ref{lemma3} $x \neq y$. That is, $x_\gamma \neq y_\gamma$ for some $\gamma\in \omega$. Thus, there exists an increasing sequence $\{\mu_j\}_{j\in\omega}$ such that $\hat{x}_{\mu_j}= x_\gamma \neq y_\gamma = \hat{y}_{\mu_j}$ for all $j \in \omega$.
Now, let $t \in \mathbb{N}$ and choose $J$ large enough so that $m_{\nu_j} - m_{\nu_{j-1}}>t$ for all $j\geq J$.
As $$ \frac{1}{p+1+m_{\nu_j}}\xi(\Tilde{x},\Tilde{y}, \frac{1}{2^t},p+1+ m_{\nu_j}) $$ equals $$ \frac{1}{p+1+m_{\nu_j}} |\{i: \Tilde{x}_{[i,i+t)}=\Tilde{y}_{[i,i+t)} \text{ for } i <p+1+ m_{\nu_j}\}|$$,
\begin{align*}
    \frac{1}{p+1+m_{\nu_j}}\xi(\Tilde{x},\Tilde{y}, \frac{1}{2^t},p+1+ m_{\nu_j}) &\geq \frac{1}{p+1+m_{\nu_j}}\big(|u(\hat{x}_{\nu_j},\nu_j)|-(t+1+a)\big) \\
    \quad &= \frac{s_{\nu_j}-(t+1+a)}{p+1+m_{\nu_j}} \\
    \quad &=\frac{2^{\nu_j}m_{\nu_j-1}}{m_{\nu_j-1}(1 + 2^{\nu_j}+\frac{p+1}{m_{\nu_j-1}}) } - \frac{t+1+a}{p+1+m_{\nu_j}}\\
    \quad &=\frac{2^{\nu_j}}{1 + 2^{\nu_j}+\frac{p+1}{m_{\nu_j-1}} } - \frac{t+1+a}{p+1+m_{\nu_j}}\\
    & \to 1 \text{ as } j \to \infty
\end{align*}
Now,
\begin{align*}
    \frac{1}{p+1+m_{\mu_j}}\xi(\Tilde{x},\Tilde{y}, 1, p+1+m_{\mu_j}) &= \frac{1}{m_{\mu_j}} |\{i: \Tilde{x}_{i} = \Tilde{y}_{i} \text{ for } i < p+1+m_{\mu_j}\}|\\
    \quad &\leq \frac{a+p+1+\overset{\mu_j-1}{\underset{n=0}{\sum}} |u(\hat{x}_{n},n)|}{p+1+m_{\mu_j}} \\
    \quad &= \frac{\overset{\mu_j-1}{\underset{n=0}{\sum}} s_n}{p+1+m_{\mu_j}} + \frac{a+p+1}{p+1+m_{\mu_j}} \\
    \quad &=\frac{m_{\mu_j-1}}{m_{\mu_j-1}(1+2^{\mu_j}+\frac{p+1}{m_{\mu_j-1}})}+ \frac{a+p+1}{p+1+m_{\mu_j}} \\
    \quad &=\frac{1}{1+2^{\mu_j}+\frac{p+1}{m_{\mu_j-1}}}+ \frac{a+p+1}{p+1+m_{\mu_j}} \\
    & \to 0 \text{ as } j \to \infty
\end{align*}

Thus in the case that $p \neq q$, $\Tilde{x},\Tilde{y}$ also form a DC1 pair.

To show $D$ is dense in $\Gamma$, let $z \in \Gamma$. For each $p \in \omega$, $z_{[0,p)}= w^{p,g}$ for some $w^{p,g} \in B_p(\Gamma)$. For each $p \in \omega$, choose any $\Tilde{x}^p \in D_{w^{p,g},r_g}$.
This defines a sequence $\{\Tilde{x}^p\}_{p \in \omega}$ in $D$ such that $\Tilde{x}^p \to z$. Therefore, $D$ is dense in $\Gamma$.

We are left to show $D$ is uncountable. As $2^\omega$ is uncountable, so is $\{r_g,r_g+p\}^\omega$. Hence $\{r_g,r_g+p\}^\omega\setminus \{(r_g)^\infty, (r_g+p)^\infty\}$ is also uncountable. Define $\varphi : \{r_g,r_g+p\}^\omega\setminus \{(r_g)^\infty, (r_g+p)^\infty\} \to D_{w^{p,g}}$ by $\varphi(x) = w_{p,g}Ku(\hat{x}_0,0)u(\hat{x}_1,1)\dots$. We will show $\varphi$ is a bijective. If $\varphi(x) = \varphi(y)$, $\Tilde{x}= \Tilde{y}$. So, $u(\hat{x}_j,j) = u(\hat{y}_j,j)$ for all $j \in \omega$. Hence $x_j = y_j$ for all $j \in \omega$ and $x = y$ as desired. Note $\varphi$ is surjective by construction. Therefore, $D_{w^{p,g}}$ is uncountable. As $D = \cup_{p \in \omega}\cup_{g \in \omega} D_{w^{p,g}} \supseteq D_{w^{p,g}}$, $D$ is uncountable as desired. Thus, $\sigma$ exhibits dense distributional chaos.
\end{proof}

Now, let us consider a subshift of bounded type.

\begin{theorem}
Let $\Gamma \subseteq \omega^\omega$ be a subshift of bounded type. If $\Gamma$ has a dense set of periodic points and is perfect, then $\Gamma$ contains an uncountable scrambled set of type 1.
\end{theorem}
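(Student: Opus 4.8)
The plan is to adapt the block construction of Theorem \ref{cyl}, replacing the single ``free'' symbol $K$ (which existed only because an SFT has finitely many symbols appearing in forbidden words, by Lemma \ref{lemma1}) with a long block drawn from a periodic point, and using the gluing criterion of Theorem \ref{sbt} to guarantee that every word we assemble is allowed. Concretely, I would first fix a non-constant periodic point $\rho = P^\infty \in \Gamma$ with primitive period $\pi = |P| \ge 2$; such a point exists because $\Gamma$ is perfect (hence contains an allowed word with two distinct symbols) and periodic points are dense (so some periodic point begins with that word, and is therefore non-constant). Letting $N$ be the bounded-type constant of Theorem \ref{sbt}, set the \emph{bridge} $B = P^c$ with $c\pi \ge N$, so $|B| \ge N$ and $B$ recurs in $\rho$ with period $\pi$. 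Because $\rho$ is not isolated and periodic points are dense, the cylinder $[B]$ contains periodic points other than $\rho$; these supply the branching that will both make the scrambled set uncountable and create the ``difference'' stages.

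Next I would introduce the same fast-growing length scale $s_0 = 1,\ m_j = \sum_{n=0}^{j} s_n,\ s_{j+1} = 2^{j+1} m_j$ used in Theorem \ref{cyl}, and for each $\theta \in \{0,1\}^\omega$ build a point $x^\theta$ by concatenating, stage by stage, long ``agreement'' blocks that are high powers $P^{t_j}$ (identical for every $\theta$) with ``difference'' blocks depending on $\theta_j$ and on the auxiliary periodic points from the first step. Consecutive blocks would be joined by overlapping them on a copy of the bridge $B$; since $|B| \ge N$, Theorem \ref{sbt} shows each finite prefix so assembled lies in $B(\Gamma)$, and as $\Gamma$ is a subshift with a bounded basis of forbidden words, the resulting infinite sequence lies in $\Gamma$. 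The length bookkeeping is then identical to Theorem \ref{cyl}: evaluating at the right-hand ends $m_{\nu_j}$ of the agreement stages, the proportion of coordinates on which $x^\theta$ and $x^{\theta'}$ agree tends to $1$, giving $\limsup_n \tfrac1n \xi(x^\theta, x^{\theta'}, \epsilon, n) = 1$ for every $\epsilon > 0$ exactly as there.

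For the $\liminf$ condition I would arrange the difference stages so that, over the $j$-th such stage, $x^\theta$ and $x^{\theta'}$ agree on a proportion of coordinates tending to $0$; then evaluating at the ends $m_{\mu_j}$ of these stages and invoking the fast growth of $s_j$ yields $\liminf_n \tfrac1n \xi(x^\theta, x^{\theta'}, \delta, n) = 0$ for a suitable $\delta = 2^{-r}$, completing the DC1 property. Finally, the map $\theta \mapsto x^\theta$ should be injective because the branch choices are recoverable from $x^\theta$, so its image is an uncountable distributionally scrambled set of type $1$, as desired.

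The hard part will be precisely this engineering of the difference stages. In Theorem \ref{cyl} differences came for free: the blocks were \emph{constant} words $c^{s_j}$ with $c \ne c'$, so two such blocks disagreed at \emph{every} coordinate. In an SBT there is no free symbol, and the criterion of Theorem \ref{sbt} forces any seam between blocks to share a common subword of length $\ge N$, which unavoidably produces runs of agreement. The crux is therefore to choose the auxiliary periodic points---one for each difference stage, available by density of periodic points and distinct from $\rho$ by perfectness---so that, away from the bounded-length bridges, the two compared points never agree on $r$ consecutive coordinates, equivalently so that the agreement density on each difference block tends to $0$ as $j \to \infty$. Primitivity of $P$ guarantees that a nontrivial phase offset of $\rho$ disagrees somewhere in every window of length $\pi$, which suggests taking $r = \pi$ and $\delta = 2^{-\pi}$; making such offsets genuinely realizable inside $\Gamma$ through valid $B$-seams, while keeping the forced agreement runs of vanishing density, is the main obstacle to be overcome.
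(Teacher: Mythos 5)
Your architecture is essentially the paper's: a long block of a periodic point plays the role of the universal ``agreement'' word in place of the SFT symbol $K$, Theorem \ref{sbt} legitimizes the seams, the length scales $s_j, m_j$ are reused verbatim, density of periodic points plus perfectness supplies the auxiliary periodic points that create the branching, and injectivity of $\theta\mapsto x^\theta$ gives uncountability. But the step you defer as ``the main obstacle to be overcome'' is precisely the mathematical content of the theorem, so as written the proposal has a genuine gap: you never establish a $\delta$ for which the difference stages force $\liminf_n \tfrac1n\,\xi(x^\theta,x^{\theta'},\delta,n)=0$. What is needed is that inside each difference stage the set of indices $l$ at which the two points agree on a window of length $r$ has vanishing density, and your suggested mechanism (phase offsets of the single periodic point $\rho$, with $\delta=2^{-\pi}$) is neither shown to be realizable through valid seams nor clearly sufficient, since two distinct blocks built over the same bridge can still agree on long runs.

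The paper closes exactly this gap with a commensurability device you do not have. It fixes a periodic $z$ of prime period $p$ exceeding the SBT constants, then two further periodic points $x,y$ agreeing with $z$ on $[0,p)$ and having strictly larger prime periods $q<r$, and sets $a=z_{[0,p)}$, $b=x_{[p,pq)}$, $c=y_{[p,pr)}$; Theorem \ref{sbt} makes all concatenations of $a$, $ba$, $ca$ allowed. The key move is to choose $A,B,C$ minimal with $A|a|=B|ba|=C|ca|=:M$ and to take the two possible difference blocks to be $(ba)^{Bs_n}$ and $(ca)^{Cs_n}$: these are aligned, of identical length, and both periodic with period $M$, so a single coordinate $\theta<M$ at which $(ba)^{B}$ and $(ca)^{C}$ differ produces a disagreement in \emph{every} window of length $M$ throughout the stage. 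Hence $\delta=2^{-M}$ works, and the remaining bookkeeping proceeds exactly as in Theorem \ref{cyl}. Without this (or an equivalent) mechanism for forcing disagreement of full density across the difference stages, your argument does not close.
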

\begin{proof}
As $\Gamma$ is a subshift of bounded type, there exists $L \in \mathbb{N}$ and a basis of forbidden words of $\Gamma$, $\mathcal{F}(\Gamma)$, such that $|w| < L$ for each $w \in \mathcal{F}(\Gamma)$. Let $N$ be as in Theorem \ref{sbt} and let $K = \max\{L,N\}$. Fix $z \in \Gamma$ periodic with prime period $p> K!$. Note $z$ exists as periodic points are dense in $\Gamma$ and $\Gamma$ is uncountable.

Let $\epsilon > 0$ be given so that if $d(x,y) < \epsilon$ then $x_{[0,p)}=y_{[0,p)}$.
Choose $x \in \Gamma$ with prime period $q > p$ such that $d(x,z) < \epsilon$ and choose $y \in \Gamma$ with prime period $r > q$ such that $d(y,z) < \epsilon$.
Notice that $$x = z_{[0,p)}x_{[p,pq)}z_{[0,p)}x_{[p,pq)} \dots$$ and
$$y = z_{[0,p)}y_{[p,pr)}z_{[0,p)}y_{[p,pr)} \dots$$
Define $a=z_{[0,p)},b=x_{[p,pq)},$ and $c=y_{[p,pr)}$.
Notice $|b| = pq - p = p(q-1) \geq p^2 > K$ and $|c| = pr - p = p(r-1) > p^2 > K$.
As $ab, ba, ac,ca \in B(\Gamma)$ and $|a|, |b|, |c| > K = \max\{L,N\} \geq N$, by  Theorem \ref{sbt} $aba, cab, cac \in B(\Gamma)$.

As $r > q > p$ are prime periods for $y,x,z$ respectively, there exist $\gamma_1,\gamma_2, \gamma_3 \in \omega$ such that $\gamma_1, \gamma_2 < r$, $\gamma_3< q$, $x_{\gamma_1} \neq y_{\gamma_1}$,  $y_{\gamma_2} \neq z_{\gamma_2}$,  and $x_{\gamma_3} \neq z_{\gamma_3}$.

Now, choose $A,B,C \in \mathbb{N}$ minimal such that $$A|a|=B|ba|=C|ca|$$
and let $M = A|a|$.
Define $ I_{\alpha_i}=
\begin{cases}
(ba)^B & \text{if }\alpha_i=0 \\
(ca)^C & \text{if }\alpha_i=1 \\
\end{cases}$.

Now, let $s_0 = 1, m_j = \overset{j}{\underset{n=0}{\sum}} M s_n,$ and $s_{j+1} = 2^{j+1}Mm_j$ for all $j\in \omega$.
For all $t \in B(\Gamma)$ and $n\in\omega$, define $u(t,n) = (t)^{s_n}$ and $v(t,n) = (t)^{As_n}$.

For each $\alpha \in 2^\omega$, define $$\varphi (\alpha)= u(I_{\alpha_0},0) v(a,1) u(I_{\alpha_0},2) u(I_{\alpha_1},3)v(a,4) \dots$$ where $a = z_{[0,p)}$ as before
and let $D=\{\varphi(\alpha) : \alpha \in 2^\omega\}$.
Notice that for each $\varphi(\alpha) \in D$, we have the following relationship between $\{s_j\}_{j\in \omega}$, $\{m_j\}_{j\in \omega}$ and the length of the segments of $\varphi(\alpha)$
$$\varphi(\alpha)=\underbracket{\underbracket{\underbracket{\underbracket{\overbracket{u(I_{\alpha_0},0)}^{Ms_0}}_{m_0}\overbracket{v(a,1)}^{Ms_1}}_{m_1} \overbracket{u(I_{\alpha_0},2)}^{Ms_2}}_{m_2}\overbracket{u(I_{\alpha_1},3)}^{Ms_3}}_{m_3}\dots$$

Suppose $\alpha \neq \beta \in 2^\omega$. Then $\alpha_i \neq \beta_i$ for some $i \in \mathbb{N}$. So $I_{\alpha_i}\neq I_{\beta_i}$.
Construct an increasing sequence $\{\mu_j\}_{j \in \omega} \subseteq \mathbb{N}$ consisting of every $n \in \mathbb{N}$ such that $u(I_{\alpha_i},\mu_j) = u(I_{\alpha_i},n)$ occurs in $\varphi(\alpha)$.

Let $\Tilde{x}=\varphi(\alpha)$ and $\Tilde{y}=\varphi(\beta)$.

Choose $\mu_j$ from the sequence defined above.
Notice for $m_{\mu_{j-1}} \leq l < m_{\mu_j}-M$, $$\Tilde{x}_{[l, l+M)}=\big(u(I_{\alpha_i},\mu_j)\big)_{[l-m_{\mu_{j-1}}, l-m_{\mu_{j-1}}+M)}$$ and $$\Tilde{y}_{[l, l+M)}=\big(u(I_{\beta_i},\mu_j)\big)_{[l-m_{\mu_{j-1}}, l-m_{\mu_{j-1}}+M)}.$$ As $ca$ is not an initial segment of $(ba)^\infty$, there exists $\theta \in \{0, 1, \dots, M-1\}$ such that $(ba)^B_\theta \neq (ca)_\theta^C$.
\\Case 1: Suppose
$$\Tilde{x}_{[l, l+M)}= \big((ba)^{Bs_{\mu_j}}\big)_{{[l-m_{\mu_{j-1}}, l-m_{\mu_{j-1}}+M)}} = (ba)^B$$
Then,
$$ \Tilde{y}_{[l, l+M)} = \big((ca)^{Cs_{\mu_j}}\big)_{{[l-m_{\mu_{j-1}}, l-m_{\mu_{j-1}}+M)}} =(ca)^C $$
Since $(ba)^B_\theta \neq (ca)_\theta^C$, $\Tilde{x}_{[l, l+M)}\neq \Tilde{y}_{[l, l+M)}$.
\\Case 2: There exists $e \in \mathbb{N}$ such that
$$\Tilde{x}_{[l, l+M)} = \big((ba)^{B\mu_j}\big)_{[l-m_{\mu_{j-1}}, l-m_{\mu_{j-1}}+M)} = \big((ba)^B)_{[e,M)}\big((ba)^B)_{[0,e)}$$
So,
$$ \Tilde{y}_{[l, l+M)}= \big((ca)^{C\mu_j}\big)_{[l-m_{\mu_{j-1}}, l-m_{\mu_{j-1}}+M)} =\big((ca)^C)_{[e,M)}\big((ca)^C)_{[0,e)}$$
Since $\theta \in \{0,1,\dots,j-1\}$ or $\theta \in \{j,j+1,\dots,M-1\}$, $\Tilde{x}_{[l, l+M)}\neq \Tilde{y}_{[l, l+M)}$.

Therefore in either case for all $m_{\mu_{j-1}} \leq l < m_{\mu_j}-M$,
 $\Tilde{x}_{[l, l+M)}\neq \Tilde{y}_{[l, l+M)}$.
Hence,
\begin{align*}
    \frac{1}{m_{\mu_j}}\xi(\Tilde{x},\Tilde{y}, \frac{1}{2^{M}}, m_{\mu_j}) &= \frac{1}{m_{\mu_j}} |\{i: \Tilde{x}_{[i,i+M)}= \Tilde{y}_{[i,i+M)} \text{ for } i < m_{\mu_j}\}|\\
    \quad &\leq \frac{\overset{\mu_j-1}{\underset{n=0}{\sum}} Ms_n + M}{m_{\mu_j}} \\
    \quad &= \frac{m_{\mu_j-1}}{m_{\mu_j}} + \frac{M}{m_{\mu_j}} \\
    \quad &=\frac{m_{\mu_j-1}}{m_{\mu_j-1}(1+2^{\mu_j})} + \frac{M}{m_{\mu_j}}\\
    \quad &=\frac{1}{1+2^{\mu_j}} + \frac{M}{m_{\mu_j}}\\
    & \to 0 \text{ as } j \to \infty
\end{align*}

We can also construct an increasing sequence $\{\nu_j\}_{j\in\omega} \subseteq \mathbb{N}$ consisting of every $n \in \mathbb{N}$ such that $v(a,\nu_j) = v(a,n)$ occurs in $\varphi(\alpha)$. Notice at these places $v(a,n)$ also occurs in $\varphi(\beta)$.
Let $R \in \mathbb{N}$ be given. Then,
\begin{align*}
    \frac{1}{m_{\nu_j}}\xi(\Tilde{x},\Tilde{y}, \frac{1}{2^{R}}, m_{\nu_j}) &= \frac{1}{m_{\nu_j}} |\{i: \Tilde{x}_{[i,i+R)}= \Tilde{y}_{[i,i+R)} \text{ for } i < m_{\nu_j}\}|\\
    \quad &\geq \frac{Ms_{\nu_j} - (R+1)}{m_{\nu_j}} \\
    \quad &= \frac{2^{\nu_j}M^2 m_{\nu_j - 1}}{m_{\nu_j-1}(1 + 2^{\nu_j}M^2)} - \frac{R+1}{m_{\nu_j}} \\
    \quad &= \frac{2^{\nu_j}M^2}{1 + 2^{\nu_j}M^2} - \frac{R+1}{m_{\nu_j}} \\
    & \to 1 \text{ as } j \to \infty
\end{align*}
Thus, $\Tilde{x},\Tilde{y}$ form a DC1 pair.
Lastly, as $\varphi$ is injective $D$ is uncountable.
\end{proof}

\bibliographystyle{Distributional_Chaos_in_the_Baire_Space} 
\bibliography{Distributional_Chaos_in_the_Baire_Space}

\begin{thebibliography}{10}
\expandafter\ifx\csname url\endcsname\relax
  \def\url#1{\texttt{#1}}\fi
\expandafter\ifx\csname urlprefix\endcsname\relax\def\urlprefix{URL }\fi
\expandafter\ifx\csname href\endcsname\relax
  \def\href#1#2{#2} \def\path#1{#1}\fi

\bibitem{Schweizer}
B.~Schweizer, J.~Sm\'{\i}tal, \href{https://doi.org/10.2307/2154504}{Measures
  of chaos and a spectral decomposition of dynamical systems on the interval},
  Trans. Amer. Math. Soc. 344~(2) (1994) 737--754.
\newblock \href {https://doi.org/10.2307/2154504} {\path{doi:10.2307/2154504}}.
\newline\urlprefix\url{https://doi.org/10.2307/2154504}

\bibitem{smital}
B.~Schweizer, A.~Sklar, J.~Sm\'{\i}tal, Distributional (and other) chaos and
  its measurement, Real Anal. Exchange 26~(2) (2000) 495--524.

\bibitem{Liao}
G.~Liao, Q.~Fan, \href{https://doi.org/10.1007/BF02900769}{Minimal subshifts
  which display {S}chweizer-{S}m\'{\i}tal chaos and have zero topological
  entropy}, Sci. China Ser. A 41~(1) (1998) 33--38.
\newblock \href {https://doi.org/10.1007/BF02900769}
  {\path{doi:10.1007/BF02900769}}.
\newline\urlprefix\url{https://doi.org/10.1007/BF02900769}

\bibitem{SKLAR}
A.~Sklar, J.~Sm\'{\i}tal,
  \href{https://doi.org/10.1006/jmaa.1999.6633}{Distributional chaos on compact
  metric spaces via specification properties}, J. Math. Anal. Appl. 241~(2)
  (2000) 181--188.
\newblock \href {https://doi.org/10.1006/jmaa.1999.6633}
  {\path{doi:10.1006/jmaa.1999.6633}}.
\newline\urlprefix\url{https://doi.org/10.1006/jmaa.1999.6633}

\bibitem{three}
F.~Balibrea, J.~Sm\'{\i}tal, M.~\v{S}tef\'{a}nkov\'{a},
  \href{https://doi.org/10.1016/j.chaos.2004.06.011}{The three versions of
  distributional chaos}, Chaos Solitons Fractals 23~(5) (2005) 1581--1583.
\newblock \href {https://doi.org/10.1016/j.chaos.2004.06.011}
  {\path{doi:10.1016/j.chaos.2004.06.011}}.
\newline\urlprefix\url{https://doi.org/10.1016/j.chaos.2004.06.011}

\bibitem{graphs}
R.~Hric, M.~M\'{a}lek, \href{https://doi.org/10.1016/j.topol.2005.09.007}{Omega
  limit sets and distributional chaos on graphs}, Topology Appl. 153~(14)
  (2006) 2469--2475.
\newblock \href {https://doi.org/10.1016/j.topol.2005.09.007}
  {\path{doi:10.1016/j.topol.2005.09.007}}.
\newline\urlprefix\url{https://doi.org/10.1016/j.topol.2005.09.007}

\bibitem{Risong}
R.~Li, \href{https://doi.org/10.1016/j.cnsns.2010.08.014}{A note on the three
  versions of distributional chaos}, Commun. Nonlinear Sci. Numer. Simul.
  16~(4) (2011) 1993--1997.
\newblock \href {https://doi.org/10.1016/j.cnsns.2010.08.014}
  {\path{doi:10.1016/j.cnsns.2010.08.014}}.
\newline\urlprefix\url{https://doi.org/10.1016/j.cnsns.2010.08.014}

\bibitem{Roth}
Z.~Roth, \href{https://doi.org/10.1142/S021812741850178X}{Distributional chaos
  and dendrites}, Internat. J. Bifur. Chaos Appl. Sci. Engrg. 28~(14) (2018)
  1850178, 10.
\newblock \href {https://doi.org/10.1142/S021812741850178X}
  {\path{doi:10.1142/S021812741850178X}}.
\newline\urlprefix\url{https://doi.org/10.1142/S021812741850178X}

\bibitem{OPROCHA}
P.~Oprocha, \href{https://doi.org/10.1090/S0002-9947-09-04810-7}{Distributional
  chaos revisited}, Trans. Amer. Math. Soc. 361~(9) (2009) 4901--4925.
\newblock \href {https://doi.org/10.1090/S0002-9947-09-04810-7}
  {\path{doi:10.1090/S0002-9947-09-04810-7}}.
\newline\urlprefix\url{https://doi.org/10.1090/S0002-9947-09-04810-7}

\bibitem{Shah}
S.~Shah, T.~Das, R.~Das,
  \href{https://doi.org/10.1007/s12346-020-00344-x}{Distributional chaos on
  uniform spaces}, Qual. Theory Dyn. Syst. 19~(1) (2020) Paper No. 4, 13.
\newblock \href {https://doi.org/10.1007/s12346-020-00344-x}
  {\path{doi:10.1007/s12346-020-00344-x}}.
\newline\urlprefix\url{https://doi.org/10.1007/s12346-020-00344-x}

\bibitem{weak}
J.~Meddaugh, B.~E. Raines,
  \href{https://doi.org/10.1016/j.jmaa.2019.07.001}{Weak specification and
  {B}aire space}, J. Math. Anal. Appl. 479~(1) (2019) 1355--1363.
\newblock \href {https://doi.org/10.1016/j.jmaa.2019.07.001}
  {\path{doi:10.1016/j.jmaa.2019.07.001}}.
\newline\urlprefix\url{https://doi.org/10.1016/j.jmaa.2019.07.001}

\bibitem{infspec}
J.~Mohn, B.~E. Raines, Infinite specification and distributional chaos in the
  {B}aire space, {S}ubmitted (2023).

\bibitem{BaireSpace}
J.~Meddaugh, B.~E. Raines, \href{https://doi.org/10.1016/j.jmaa.2021.125097}{A
  characterization of {$\omega$}-limit sets in subshifts of {B}aire space}, J.
  Math. Anal. Appl. 500~(1) (2021) Paper No. 125097, 11.
\newblock \href {https://doi.org/10.1016/j.jmaa.2021.125097}
  {\path{doi:10.1016/j.jmaa.2021.125097}}.
\newline\urlprefix\url{https://doi.org/10.1016/j.jmaa.2021.125097}

\end{thebibliography}
\end{document}